\documentclass[12pt, twosided]{article}
\usepackage{fullpage}
\usepackage[T1]{fontenc}
\usepackage[mathlines]{lineno} %% <- remove [mathlines] to omit equation line numbers
\usepackage{amsmath, amsthm, amsfonts, amssymb}
\usepackage{graphicx}
\usepackage{color}
\usepackage{lineno}
\usepackage{thm-restate}
\usepackage{enumitem}
\usepackage[dvipsnames]{xcolor}
\usepackage[utf8]{inputenc}
\usepackage[english]{babel}
\usepackage{cite}
\usepackage[title]{appendix}
\usepackage{hyperref}

\usepackage{etoolbox} %% <- for \pretocmd, \apptocmd and \patchcmd

%% Patch 'normal' math environments:
\newcommand*\linenomathpatch[1]{%
	\cspreto{#1}{\linenomath}%
	\cspreto{#1*}{\linenomath}%
	\csappto{end#1}{\endlinenomath}%
	\csappto{end#1*}{\endlinenomath}%
}
%% Patch AMS math environments:
\newcommand*\linenomathpatchAMS[1]{%
	\cspreto{#1}{\linenomathAMS}%
	\cspreto{#1*}{\linenomathAMS}%
	\csappto{end#1}{\endlinenomath}%
	\csappto{end#1*}{\endlinenomath}%
}

%% Definition of \linenomathAMS depends on whether the mathlines option is provided
\expandafter\ifx\linenomath\linenomathWithnumbers
\let\linenomathAMS\linenomathWithnumbers
%% The following line gets rid of an extra line numbers at the bottom:
\patchcmd\linenomathAMS{\advance\postdisplaypenalty\linenopenalty}{}{}{}
\else
\let\linenomathAMS\linenomathNonumbers
\fi

\linenomathpatch{equation}
\linenomathpatchAMS{gather}
\linenomathpatchAMS{multline}
\linenomathpatchAMS{align}
\linenomathpatchAMS{alignat}
\linenomathpatchAMS{flalign}

%-----------------------------------------------------------------

\newtheorem{theorem}{Theorem}
\newtheorem{thm}{Theorem}
\newtheorem{remark}{Remark}

\newtheorem{cor}[thm]{Corollary}
\newtheorem{lemma}[theorem]{Lemma}

\newtheorem{proposition}{Proposition}
\newtheorem*{prop}{Proposition}

\theoremstyle{definition}

\theoremstyle{remark}

%%%%%%%%

%\linenumbers
\title{Hamiltonicity of Token Graphs of some Join Graphs}
\author{Luis Enrique Adame\thanks{Unidad Acad\'emica de Matem\'aticas, Universidad Aut\'onoma de Zacatecas, Zacatecas, Mexico. \texttt{l\_e\_a\_m\_@hotmail.com, luismanuel.rivera@gmail.com}} \and Luis Manuel Rivera\footnotemark[1] \and Ana Laura Trujillo-Negrete\thanks{Departamento de Matem\'aticas, Cinvestav, CDMX, Mexico, \texttt{ltrujillo@math.cinvestav.mx}}
}
\date{\today}

\begin{document}
\maketitle

%\tableofcontents

\abstract{Let $G$ be a simple graph of order $n$ with vertex set $V(G)$ and edge set $E(G)$, and let $k$ be an integer such that $1\leq k\leq n-1$.  
	The $k$-token graph $G^{\{k\}}$ of $G$ is the graph whose vertices are the $k$-subsets of $V(G)$, where two vertices $A$ and $B$ are adjacent in $G^{\{k\}}$ whenever their symmetric difference $A\triangle B$, defined as $(A\setminus B)\cup (B\setminus A)$, is a pair $\{a,b\}$ of adjacent vertices in $G$.  In this paper we study the Hamiltonicity of the $k$-token graphs  of some join graphs. We provide an infinite family of graphs, containing Hamiltonian and non-Hamiltonian graphs, for which their $k$-token graphs are Hamiltonian. Our result provides, to our knowledge, the first family of non-Hamiltonian graphs for which it is proven the Hamiltonicity of their $k$-token graphs, for any $2<k<n-2$.}
\\

{\it Keywords:}  Token graphs, Hamiltonicity, fan graphs.\\
{\it AMS Subject Classification Numbers:}    05C76, 05C45.

\section{Introduction}%{Introduction}

Let $G$ be a simple graph of order $n \geq 2$ with vertex set $V(G)$ and edge set $E(G)$, and let $k$ be an integer such that $1\leq k\leq n-1$. The \emph{$k$-token graph} $G^{\{k\}}$ of $G$ is the graph whose vertices are all the $k$-subsets of $V(G)$, where two vertices $A$ and $B$ are adjacent in $F_k(G)$ whenever their symmetric difference $A\triangle B$, defined as $(A\setminus B)\cup (B\setminus A)$, is a pair $\{a,b\}$ of adjacent vertices in $G$. See an example in Figure~\ref{fig:definition-example}. 
The token graphs are a generalization of the Johnson graphs~\cite{Jo}. For $n>k\ge 1$, the Johnson graph $J(n,k)$ is the graph whose vertices are all the $k$-subsets of the set $\{1,2,\dots,n\}$, where two $k$-subsets are adjacent in $J(n,k)$ if they intersect in $k-1$ elements. Thus, $J(n,k)=F_k(K_n)$. Token graphs and $k$-uniform hypergraphs are also related as follows: consider the complete $k$-uniform hypergraph $H$ on $\{1,2,\dots,n\}$. The $t$-line graph of $H$ is the graph whose vertices are the edges of $H$, two being joined if the edges they represent intersect in at least $t$ elements. Then, the $k$-token graph of $K_n$ is isomorphic to the ($k-1$)-line graph of $H$.

\begin{figure}[t]
	\centering
	\includegraphics[width=0.6\textwidth]{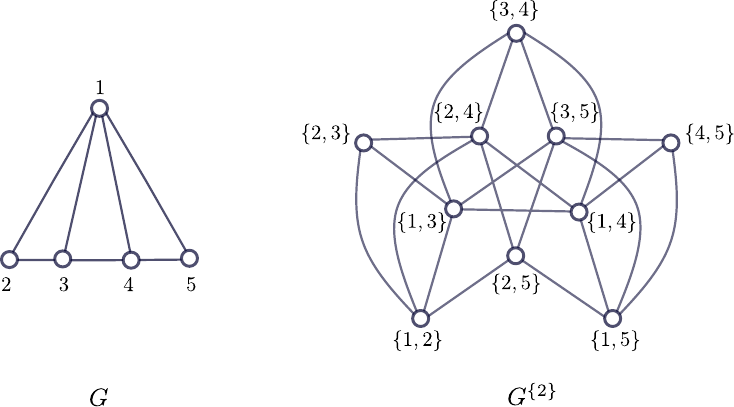}
	\caption{\small A graph $G$ and its $2$-token graph $G^{\{2\}}$.}
	\label{fig:definition-example}
\end{figure}

The $2$-token graphs are usually called  \emph{double vertex graphs} and the study of its combinatorial properties began in the 90's with the works of Alavi and its coauthors, see, e.g., \cite{alavi1, alavi2, alavi3, alavi4}, where they studied the connectivity, planarity, regularity and Hamiltonicity of some of such  graphs. Later, Zhu et al.~\cite{zhu} generalized this concept to the $k$-tuple vertex graphs, that are in fact the $k$-token graphs. In 2002, Rudolph \cite{aude, rudolph} redefined the token graphs, calling them {\it symmetric powers of graphs}, with the aim to study the graph isomorphism problem and for its possible applications to quantum mechanics. Several authors have continued with the study of the possible applications of  token graphs in physics (see. e.g., \cite{fisch2, fisch, ouy}). 

In 2012, Fabila-Monroy et al.~\cite{FFHH} reintroduced the concept of $k$-token graph of $G$ with the following interpretation. Consider $k$ indistinguishable tokens and placed them on the vertices of $G$ (at most one token per vertex), where each token can be slid from one vertex to another along the edges of $G$. Define a new graph whose vertices are all the possible token configurations, with two of such configurations being adjacent if one configuration can be reached from the other by sliding a token along an edge of $G$. This new graph is isomorphic to the $k$-token graph $G^{\{k\}}$ of $G$. In Figure~\ref{fig:tokens} is depicted the graph $G^{\{2\}}$ of Figure~\ref{fig:definition-example} but with this interpretation. % \textcolor{red}{of tokens moving on the graph}.  
Fabila-Monroy et al.~\cite{FFHH} began 
	a systematic study of some properties of $G^{\{k\}}$ such as connectivity, diameter, clique number, chromatic number and Hamiltonicity. This line of research has been continued by different authors, see, e.g.,  
	\cite{dealba2, token2, dalfo, deepa, deepa2, ruyanalea, soto, leancri, leatrujillo}.  

\begin{figure}[h]
	\centering
	\includegraphics[width=0.5\textwidth]{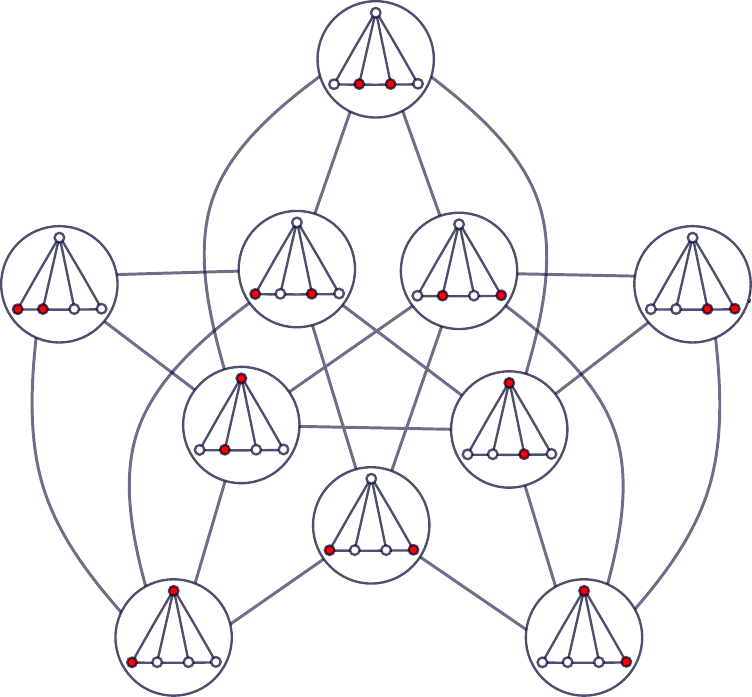}
	\caption{A $2$-token graph seen as the model of tokens moving on the graph.}
	\label{fig:tokens}
\end{figure}

\subsection{Hamiltonicity in token graphs} 

A  \emph{Hamiltonian cycle} of a graph $G$ is a cycle containing each vertex of $G$ exactly once. A graph $G$ is \emph{Hamiltonian} if it contains a Hamiltonian cycle. The Hamiltonian problem is to determine whether  a graph is Hamiltonian. Although the problem has been widely studied, there is no known non-trivial characterization of Hamiltonian graphs. On the positive side, several necessary and sufficient conditions have been discovered. Particularly, the Hamiltonicity problem for bipartite graphs has been studied in depth, see, e.g., \cite{shang}. 
Regarding the computational complexity of the problem, it is well-known that the Hamiltonian problem is NP-Complete \cite{np-complete-problems}.

It is well known that the Hamiltonicity of $G$ does not imply the Hamiltonicity of $G^{\{k\}}$. 
For example, Fabila-Monroy et al.~\cite{FFHH} showed that if $k$ is even, then $K_{m, m}^{\{k\}}$ is non-Hamiltonian. 
An easier, traditional example is the case of a cycle graph; it is known (see, e.g.~\cite{alavi4}) that if $n=4$ or $n \geq 6$, then $C_n^{\{2\}}$ is not Hamiltonian. On the other hand, 
there exist non-Hamiltonian graphs for which their double vertex graph is Hamiltonian, a simple example is the graph $K_{1,3}$, for which $K_{1,3}^{\{2\}}\simeq C_6$, and so $K_{1,3}^{\{2\}}$ is Hamiltonian. Some results about the Hamiltonicity of some double vertex graphs can be found, for example, in the survey of Alavi et. al.~\cite{alavi3}.

To our knowledge, for any integer $k>2$, all the graphs $G$ for which the Hamiltonicity of $G^{\{k\}}$ (resp. the existence of a Hamiltonian path in $G^{\{k\}}$) has been proven  are Hamiltonian (resp. contain a Hamiltonian path). This fact motivates us to search for a family of non-Hamiltonian graphs (and without Hamiltonian paths) with Hamiltonian $k$-token graphs, where $2<k<n$. 
 
\subsection{Basic definitions and results}
In order to formulate our results, we need the following definitions. 
Given two disjoint graphs $G$ and $H$, the \textit{join graph} $G+H$ 
of $G$ and $H$ is the graph whose vertex set is $V(G)\cup V(H)$ and its edge set is $E(G)\cup E(H)\cup \{xy:x\in G\text{ and }y\in H\}$. The \emph{generalized fan graph}, $F_{m,n}$, or simply \emph{fan graph}, is the join graph $F_{m,n}=\overline{K_m} +P_n$, where $\overline{K_m}$ denotes the graph of $m$ isolated vertices and $P_n$ denotes the path graph of $n$ vertices. The graph $G$ of Figure~\ref{fig:definition-example} is isomorphic to $F_{1, 4}$.

The join of graphs is a binary operation that has been widely studied in different contexts (see. e. g., \cite{alhevaz, carba, juan}), including its possible applications, as in  quantum information \cite{angeles}. As was mentioned before, the Hamiltonicity of bipartite graphs has been studied in depth, in some sense, and hence we are interested in the Hamiltonicity of non-bipartite $k$-token graphs $G^{\{k\}}$. Due to a result of Fabila-Monroy et al. \cite{FFHH}, such graphs come from non-bipartite graphs G. In this paper we are interested in the join of two graphs in which one of them contains a Hamiltonian path. These graphs are non-bipartite and may be Hamiltonian or not. 

In 2018, Rivera and Trujillo-Negrete~\cite{rive-tru} showed the Hamiltonicity of $F_{1,n}^{\{k\}}$, for any integers $k$ and $n$ with $1<k<n$. In this paper,  we study the Hamiltonicity of the $k$-token graphs of fan graphs $F_{m,n}$, for $m>1$. The family of fan graphs $F_{m,n}$ contains an infinite number of graphs containing no Hamiltonian paths, precisely, when $m>n+1$.
 
Our main result for the case $k=2$ is the following

\begin{theorem}
	\label{thm:main1}
	The double vertex graph of $F_{m,n}$ is Hamiltonian if and only if $n\geq 2$ and $1\leq m \leq 2\,n$, or $n=1$ and $m=3$. 
\end{theorem}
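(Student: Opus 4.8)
The plan is to treat the two implications separately. The ``only if'' direction is a short counting argument; the ``if'' direction is an explicit construction of a Hamiltonian cycle, and that is where the real work lies.

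\textbf{Necessity.} I would split on the value of $n$. If $n=1$ then $F_{m,1}\cong K_{1,m}$, and a direct inspection of adjacencies shows that $K_{1,m}^{\{2\}}$ is bipartite: one side consists of the $m$ pairs containing the centre and a leaf, the other of the $\binom m2$ pairs of leaves. Since a bipartite graph with sides of different sizes has no Hamiltonian cycle, Hamiltonicity forces $m=\binom m2$, i.e.\ $m=3$, and conversely $K_{1,3}^{\{2\}}\cong C_6$ as noted in the Introduction. If $n\ge 2$, let $U$ be the set of the $m$ isolated vertices and classify each vertex of $F_{m,n}^{\{2\}}$ as of type $UU$, $UW$ or $WW$ according to how many of its two elements lie in $U$. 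One checks easily that a $UU$-vertex is adjacent only to $UW$-vertices, while a $WW$-vertex is adjacent only to $WW$- and $UW$-vertices. Hence, in any Hamiltonian cycle, the $2\binom m2$ edges incident with $UU$-vertices all have their other endpoint among the $mn$ $UW$-vertices, and each $UW$-vertex receives at most two of them; this gives $\binom m2\le mn$, i.e.\ $m\le 2n+1$. If $m=2n+1$ the inequality is tight, so every $UW$-vertex has both of its cycle-neighbours in $UU$; then no $WW$-vertex has a cycle-neighbour outside $WW$, so (as $n\ge 2$ makes the set of $WW$-vertices a nonempty proper subset of $V(F_{m,n}^{\{2\}})$) the $WW$-vertices together with their cycle-edges form a disjoint union of cycles properly contained in the Hamiltonian cycle, a contradiction. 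Therefore $m\le 2n$.

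\textbf{Sufficiency.} The case $n=1$, $m=3$ is $C_6$, so assume $n\ge 2$ and $1\le m\le 2n$. I would build a Hamiltonian cycle explicitly by induction on $m$, the base case $m=1$ being the fan graph $F_{1,n}$, whose token graph is Hamiltonian by~\cite{rive-tru} (with perhaps one or two further small base cases handled by hand). It is convenient to picture the $UW$-vertices as an $m\times n$ array $A_{i,j}=\{u_i,v_j\}$ whose only token-graph edges are the ``vertical'' edges $A_{i,j}A_{i,j+1}$; the $UU$-vertex $B_{i,k}=\{u_i,u_k\}$ is adjacent to every vertex of columns $i$ and $k$ (and to nothing else), and the $WW$-vertex $C_{j,l}=\{v_j,v_l\}$ is adjacent to every vertex of rows $j$ and $l$ and, when $|j-l|=1$, to the corresponding neighbouring $WW$-vertices. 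I would strengthen the induction hypothesis to produce a Hamiltonian cycle of a controlled shape: each column contributes a prescribed set of its vertical edges, the consecutive $WW$-vertices $C_{j,j+1}$ are spliced singly into vertical edges while the remaining $WW$-vertices are threaded along a fixed path inside the $WW$-block, and at least one vertical edge of every column is kept free. Keeping one free vertical edge per column is exactly what the bound $m\le 2n$ makes possible.

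\textbf{Inductive step and main obstacle.} Passing from $m$ to $m+1$ (so $m+1\le 2n$) adjoins the isolated vertex $u_{m+1}$, which creates a new column $A_{m+1,1},\dots,A_{m+1,n}$ (a path) together with $m$ new $UU$-vertices $B_{k,m+1}$, where $B_{k,m+1}$ is adjacent to all of the new column and all of old column $k$ and to no other old vertex. I would splice the new column, as a single path, into one edge of the current cycle joining a neighbour of $A_{m+1,1}$ to a neighbour of $A_{m+1,n}$ — for $n\ge 3$ an edge of the form $C_{1,n}C_{2,n}$ inside the $WW$-block works, since $C_{1,n}\sim A_{m+1,1}$ and $C_{2,n}\sim A_{m+1,n}$ — and then insert the $m$ new $UU$-vertices one at a time into $m$ distinct free vertical edges (the $n-1$ vertical edges of the new column plus the free vertical edges reserved in the old columns), finally re-establishing the invariant. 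The principal obstacle is the $WW$-block: a $WW$-vertex $C_{j,l}$ with $l>j+1$ cannot be inserted into a single cycle-edge, because its grid-neighbours lie in two non-consecutive rows and are therefore mutually non-adjacent, so such a vertex must always sit next to another $WW$-vertex; this is what forces the delicate bookkeeping in the invariant and in the base cases. Secondary difficulties are to check that the edge budget remains feasible up to the tight case $m=2n$ (where every vertical edge is used) and to dispose by hand of the smallest instances, in particular $n=2$, where the $WW$-block is a single vertex and there is essentially no slack.
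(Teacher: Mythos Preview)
Your necessity argument is correct and is essentially the same toughness idea the paper uses, just phrased as an edge count in the cycle rather than as a component count after deleting the $UW$-vertices. The paper's version is marginally cleaner: it removes the set $A=\{\{w_i,v_j\}\}$ and observes that $F_{m,n}^{\{2\}}-A$ has at least $\binom{m}{2}+1$ components, so $m>2n$ is ruled out in a single stroke without your separate treatment of $m=2n+1$.

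Your sufficiency plan, however, is genuinely different from the paper's and has a real gap. The paper does \emph{not} induct on $m$; it writes down an explicit Hamiltonian cycle for the extremal case $m=2n$ (built from one path $P_1$ covering all $WW$- and $\{w_1,\cdot\}$-vertices, together with paths $P_2,\dots,P_{2n}$ that interleave the $UW$-vertices of column~$i$ with $UU$-vertices), and then obtains the cycle for $1<m<2n$ by deleting from these $P_i$ the vertices $\{w_i,w_j\}$ with $j>m$. The key structural feature of the paper's cycle that your scheme lacks is that many $UU$-vertices sit \emph{between two different columns}: the last vertex of $P_i$ is $\{w_i,w_{i+1}\}$, which is adjacent to $\{w_i,v_1\}$ on one side and to $\{w_{i+1},v_n\}$ on the other. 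In other words, $B_{i,i+1}$ is used as a bridge $A_{i,1}\,B_{i,i+1}\,A_{i+1,n}$, not as an insertion into a vertical edge of a single column.

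Your invariant, by contrast, places every $C_{j,j+1}$ and every $B_{i,k}$ into a distinct vertical edge. Since a vertex spliced into a vertical edge has both cycle-neighbours in the same column, and since $B_{i,k}$ is adjacent only to columns $i$ and $k$ (and not to any other $UU$- or $WW$-vertex), each vertical edge can host at most one such spliced vertex. Thus your scheme needs $\binom{m}{2}+(n-1)$ slots but only $m(n-1)$ exist; for $m=2n$ this is $2n^{2}-1$ versus $2n^{2}-2n$, and already at $m=2n-1$ the budget fails. So the induction as you describe it cannot reach the tight case (contrary to your remark that ``every vertical edge is used'' there). The fix is precisely the mechanism the paper exploits: allow some $UU$-vertices to bridge between columns rather than sit inside a single column. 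If you want to salvage the inductive approach you will need to build that into your invariant, which makes the bookkeeping substantially heavier than you indicate; the paper sidesteps all of this by constructing the $m=2n$ cycle directly and trimming.
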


For the general case $k \geq 2$, our main result is the following.
\begin{restatable}{theorem}{kgeneral}
	\label{thm:general-case-main}
	Let $k, n, m$ be positive integers such that  $2\le k\le n$ and $1\leq m\leq 2n$. Then $F_{m, n}^{\{k\}}$ is Hamiltonian. 
\end{restatable}

It is easy to see that if $H$ is a spanning subgraph of $G$, then $F_k(H)$ is a spanning subgraph of $F_k(G)$, for any integer $k$ with $1\le k\le n-1$. This property and Theorem~\ref{thm:general-case-main} implies the following:  

\begin{cor}
	\label{cor:main1}
	Let $k, n, m$ be positive integers such that  $2\le k\le n$ and $1\leq m\leq 2n$. Let $G_1$ and $G_2$ be two graphs
	of order $m$ and $n$, respectively, such that $G_2$ has a Hamiltonian path. Let $G=G_1+G_2$. Then $G^{\{k\}}$ is Hamiltonian. 
\end{cor}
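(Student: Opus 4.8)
\textbf{Proof proposal for Corollary \ref{cor:main1}.}

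The plan is to reduce the general statement to Theorem~\ref{thm:general-case-main} by exhibiting $F_{m,n}^{\{k\}}$ as a spanning subgraph of $G^{\{k\}}$, and then invoking the elementary fact that a graph containing a spanning Hamiltonian subgraph is itself Hamiltonian. First I would observe that since $G_1$ has order $m$ and $G_2$ has order $n$, the join $G = G_1 + G_2$ has order $m+n$, which matches the order of $F_{m,n} = E_m + P_n$. Moreover, because $G_2$ has a Hamiltonian path, we may identify $V(G_2)$ with $V(P_n)$ in such a way that every edge of $P_n$ is an edge of $G_2$; and trivially every edge of $E_m$ (there are none) is an edge of $G_1$. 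Under the identification $V(F_{m,n}) = V(E_m) \cup V(P_n) \longleftrightarrow V(G_1)\cup V(G_2) = V(G)$, every edge of $F_{m,n}$ is an edge of $G$: the $E_m$--$P_n$ join edges map to the $G_1$--$G_2$ join edges (all present in $G = G_1+G_2$), the $P_n$ edges map to edges of $G_2$, and there are no $E_m$ edges to worry about. Hence $F_{m,n}$ is a spanning subgraph of $G$.

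Next I would use the monotonicity of the $k$-token construction under taking subgraphs: if $H$ is a spanning subgraph of $G$ (same vertex set, $E(H)\subseteq E(G)$), then $H^{\{k\}}$ is a spanning subgraph of $G^{\{k\}}$. This is immediate from the definition — the vertex sets of $H^{\{k\}}$ and $G^{\{k\}}$ are both the collection of all $k$-subsets of the common vertex set, and if two $k$-subsets $A,B$ have symmetric difference $\{x,y\}$ with $xy\in E(H)$, then $xy\in E(G)$ as well, so every edge of $H^{\{k\}}$ is an edge of $G^{\{k\}}$. Applying this with $H = F_{m,n}$, we conclude that $F_{m,n}^{\{k\}}$ is a spanning subgraph of $G^{\{k\}}$.

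Finally, the hypotheses $k\geq 2$, $n\geq k$ and $1\leq m\leq 2n$ are exactly those of Theorem~\ref{thm:general-case-main}, so $F_{m,n}^{\{k\}}$ is Hamiltonian; it therefore contains a Hamiltonian cycle on all $\binom{m+n}{k}$ vertices. Since $F_{m,n}^{\{k\}}$ and $G^{\{k\}}$ have the same vertex set and $E(F_{m,n}^{\{k\}})\subseteq E(G^{\{k\}})$, this same cycle is a Hamiltonian cycle of $G^{\{k\}}$, which proves that $G^{\{k\}}$ is Hamiltonian.

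I do not anticipate a genuine obstacle here: the corollary is a soft consequence of the theorem, and the only mild care needed is in phrasing the identification of $V(G_2)$ with the vertices of a Hamiltonian path so that the relabeling makes $F_{m,n}$ literally a subgraph of $G$ rather than merely isomorphic to one — and in recording the (routine but worth stating) subgraph-monotonicity of the $k$-token operation, which may already be available in the cited literature and can simply be quoted if so.
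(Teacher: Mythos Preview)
Your argument is correct and is precisely the intended one: the paper does not spell out a separate proof of Corollary~\ref{cor:main1}, presenting it simply as ``a consequence'' of Theorem~\ref{thm:general-case-main}, and your spanning-subgraph reduction (relabel $V(G_2)$ along a Hamiltonian path so that $F_{m,n}\subseteq G$, then use the monotonicity $H\subseteq G\Rightarrow H^{\{k\}}\subseteq G^{\{k\}}$) is exactly the implicit reasoning behind that sentence.
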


We point out that this corollary provides our desired family of non-Hamiltonian graphs (and without  Hamiltonian paths) with Hamiltonian $k$-token graphs, 
for example, if $n+1<m\le 2n$, then $F_{m,n}$ contains no Hamiltonian path, while, as we will show in Theorem~\ref{thm:general-case-main}, $G^{\{k\}}$ is Hamiltonian, for any $2\le k\le n+m-2$.  

Finally, we give some notation that we will use in our proofs. Let $V(P_n):=\{v_1,\ldots,v_n\}$ 
and $V(\overline{K_m}):=\{w_1,\ldots, w_m\}$, so  
$V(F_{m,n})=\{v_1,\ldots,v_n,w_1,\ldots,w_m\}$. 
For a path $T=a_1 a_2 \dots a_{l-1}a_{l}$, we denote by $\overleftarrow{T}$ the reverse path $a_la_{l-1}\dots a_2a_1$. 
For a graph $G$, we denote by $\mu(G)$ the number of connected components of $G$. 

The rest of the paper is organized as follows. In Section~\ref{sec:double} we
present the proof of Theorem~\ref{thm:main1} and in Section~\ref{sec:main} the proof of Theorem~\ref{thm:general-case-main}. Our strategy to prove these results is the following: for $k=2$, we show explicit Hamiltonian cycles, and for $k>2$, we use induction on $m$; we often use the interpretation of the $k$-token graph of $G$ as the model of $k$ tokens moving along the edges of $G$. In Section~\ref{app:Gray} we give the relationship between the Hamiltonicity of token graphs and Gray codes for combinations. Finally, in Section~\ref{sec:concl} we present the conclusions and suggest three open problems. 

\section{Proof of Theorem~\ref{thm:main1}}
\label{sec:double}

This section is devoted to the proof of Theorem~\ref{thm:main1}. For the case $1\leq m\leq 2n$, we construct an explicit Hamiltonian cycle in $F_{m,n}^{\{2\}}$, 
in which the vertices $\{w_1,v_1\}$ and $\{v_1,v_2\}$ are adjacent (this fact will be used in the proof of Theorem~\ref{thm:general-case-main} for the case $k>2$).  

If $n=1$ then $F_{m, n} \simeq K_{1,m}$, and it is known that $K_{1,m}^{\{2\}}$ is Hamiltonian if and only if $m=3$ 
(see, e.g., Proposition 5 in~\cite{alavi3}). From now on, assume $n\geq 2$.  We distinguish four cases: either $m=1$, $m=2n$, 
$1<m<2n$ or $m>2n$. 

	\begin{figure}[t]
		\centering 
		\includegraphics[width=0.8\textwidth]{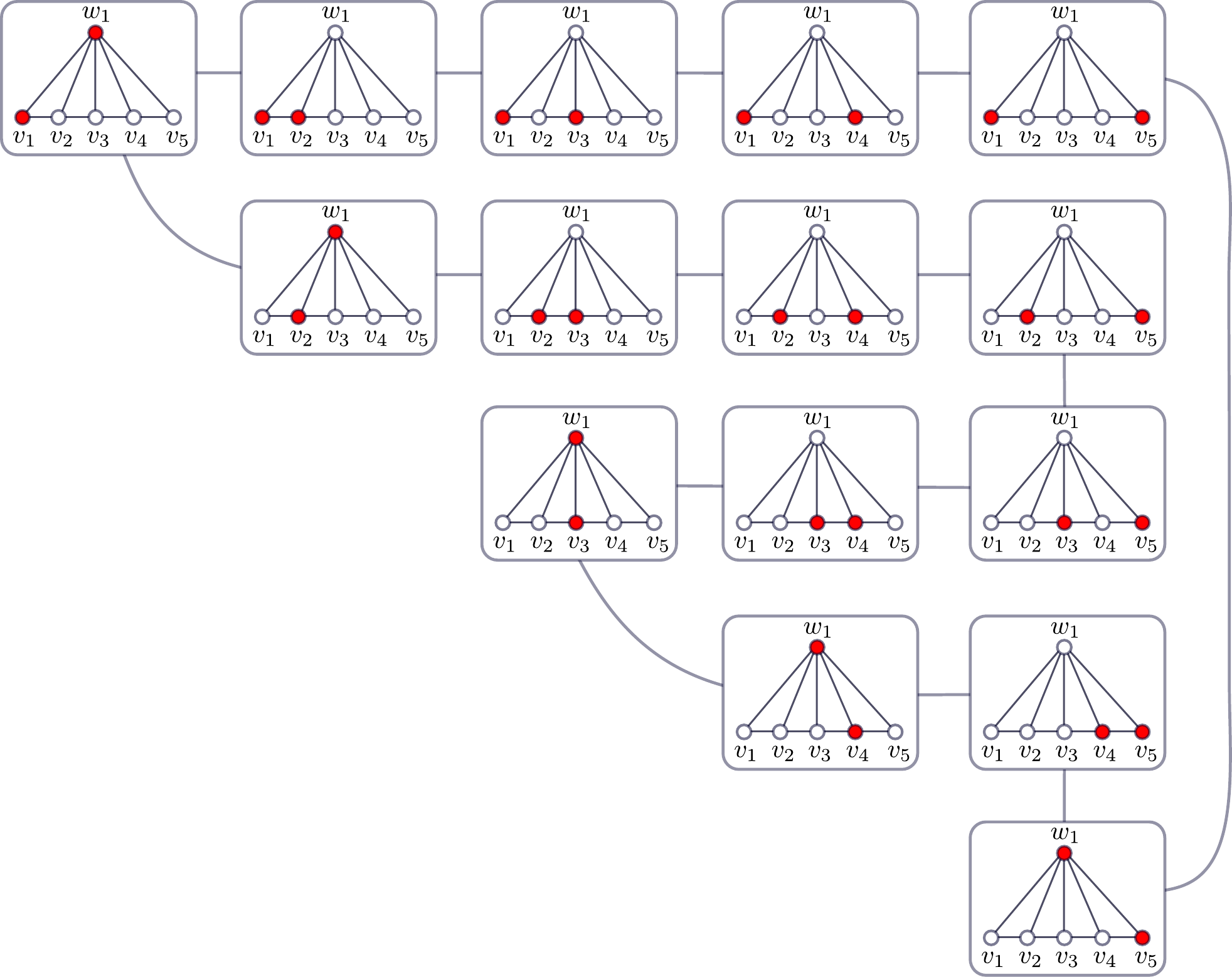}
		\caption{Hamiltonian cycle $C$ in the $2$-token graph of $F_{1,5}$.}
		\label{fig:construction1}
	\end{figure}

\begin{itemize}
	\item {\bf Case} $\boldsymbol{m=1.}$ 	
	
	For $n=2$ we have $F_{1,2}^{\{2\}}\simeq F_{1,2}$, and so $F_{1,2}^{\{2\}}$ is Hamiltonian. 
	Now we work the case $n\geq 3$. For $1\leq i< n$ let 
	\[T_i:=\{v_i,w_1\}\{v_i,v_{i+1}\}\{v_i, v_{i+2}\} \dots \{v_i,v_n\}\]
	and let $T_n:=\{v_n,w_1\}$.
	It is clear that every $T_i$ is a path in $F_{1,n}^{\{2\}}$ and that $\{T_1, \dots, T_n\}$ 
	is a partition of $V\left(F_{1,n}^{\{2\}}\right)$. 
	
	Let 
	\[
	C:=
	\begin{cases}
	\overleftarrow{T_1}\,T_2\overleftarrow{T_3}\,T_4\dots \overleftarrow{T_{n-1}}\,T_n\{v_1, v_n\} & \text{ if $n$ is even,} \\
	\overleftarrow{T_1}\,T_2\overleftarrow{T_3}\,T_4\,\dots \,T_{n-1}\,\overleftarrow{T_{n}}\,\{v_1, v_n\} & \text{ if $n$ is odd.} \\
	\end{cases} 
	\]
	We are going to show that $C$ is a Hamiltonian cycle of $F_{1,n}^{\{2\}}$. Suppose that $n$ is even, so
		\begin{equation*}
			C=\underbrace{\{v_1, v_n\} \dots \{v_1, w_1\}}_{\overleftarrow{T_1}}\underbrace{\{v_2, w_1\} \dots \{v_2, v_n\}}_{T_2}\,\dots\,\underbrace{\{v_n, w_1\}}_{T_n}\{v_1, v_n\}.
	\end{equation*}
	
	For $i$ odd, the final
	vertex of $\overleftarrow{T_i}$ is $\{v_i,w_1\}$, while the initial vertex of $T_{i+1}$ is $\{v_{i+1},w_1\}$, and
	since these two vertices are adjacent in $F_{1,n}^{\{2\}}$, the concatenation $\overleftarrow{T_i}\,T_{i+1}$ corresponds to 
	a path in $F_{1,n}^{\{2\}}$. Similarly, for $i$ even, the final vertex of $T_i$ is $\{v_i,v_n\}$ while the initial
	vertex of $\overleftarrow{T_{i+1}}$ is $\{v_{i+1},v_n\}$, so again, the concatenation $T_i\,\overleftarrow{T_{i+1}}$
	corresponds to a path in $F_{1,n}^{\{2\}}$. Also note that the unique vertex of $T_n$ is $\{v_n,w_1\}$, which is adjacent
	to $\{v_1,v_n\}$. As the initial vertex of  $\overleftarrow{T_1}$ is $\{v_1,v_n\}$, we have that $C$ is a cycle in $F_{1,n}^{\{2\}}$. 
As an example, in Figure~\ref{fig:construction1} we show the Hamiltonian cycle $C$ in $F_{1,5}^{\{2\}}$, which is constructed as above.

The proof for $n$ odd is analogous. 

%%%%%%
%%%%%%%
	
	\item {\bf Case} $\boldsymbol{m=2n.}$

	Let $C$ be the cycle defined in the previous case depending on the parity of $n$.
	Let 
	\[
	P_1:=\{v_n,w_1\} \xrightarrow{C} \{v_1,v_n\}
	\]   
	be the path obtained from $C$ by deleting the edge between $\{v_n, w_1\}$ and
	$\{v_1, v_n\}$. 
	For $1< i\leq n$ let
	\[
	\begin{split}
	P_i:=&\{w_i,v_n\}\{w_i,w_1\}\{w_i,v_{n-1}\}\{w_i,w_{i+(n-1)}\}\{w_i,v_{n-2}\}\{w_i,w_{i+(n-2)}\}\{w_i,v_{n-3}\}\\&\{w_i,w_{i+(n-3)}\}\ldots 
	\{w_i,v_2\}\{w_i,w_{i+2}\}\{w_i,v_1\}\{w_i,w_{i+1}\}.
	\end{split}
	\]
	We can observe that after $\{w_i, w_1\}$, the vertices in the path $P_i$ follows the  pattern $\{w_{i}, v_j\}\{w_{i}, w_{i+j}\}$, from $j=n-1$ to $1$.
	For $n+1\leq i\leq 2n$ let
	\[
	\begin{split}
	P_i:=&\{w_i,v_n\}\{w_i,w_{i+n}\}\{w_i,v_{n-1}\}\{w_i,w_{i+(n-1)}\}\{w_i,v_{n-2}\}\{w_i,w_{i+(n-2)}\}\ldots\\
	& \{w_i,v_2\}\{w_i,w_{i+2}\}\{w_i,v_1\}\{w_i,w_{i+1}\},
	\end{split}
	\]
	where the sums are taken mod $2n$ with the convention that $2n\pmod{2n}=2n$. In this case, the vertices  in $P_i$ after $\{w_i,w_{i+n}\}$ follow the  pattern $\{w_{i}, v_j\}\{w_{i}, w_{i+j}\}$, from $j=n-1$ to $1$.
	
Let
		\[
		C_2:=P_1\,P_2\,\ldots\,P_{2n}\{v_n,w_1\}.
		\]
		Let us show that $C_2$ is a Hamiltonian cycle of $F_{m,n}^{\{2\}}$.
	First we show that \[\{V(P_1),\dots,V\left(P_{2n}\right)\}\] is a partition of $V\left(F_{m,n}^{\{2\}}\right)$.  
	\begin{itemize}
		\item $\{v_i, v_j\}$ belongs to $P_1$, for any $i,j\in [n]$ with $i\neq j$.
		\item $\{w_i,v_j\}$ belongs to $P_i$, for any $i\in [m]$ and $j\in [n]$.
		\item $\{w_i, w_1\}$ belongs to $P_i$, for any $i\in [m]$ with $i\neq 1$. 
		\item Consider now the vertices of type $\{w_i,w_j\}$, for $1<i<j\leq n$, 
		\begin{itemize}
			\item $\{w_i,w_j\}$ belongs to $P_i$, for any $1<i\leq n$ and $i<j\leq i+n-1$.
			\item $\{w_i,w_j\}$ belongs to $P_j$, for any $1<i\leq n$ and $i+n-1< j\leq 2n$.
			\item $\{w_i,w_j\}$ belongs to $P_i$, for any $n< i< 2n$ and $i<j\leq 2n$.
		\end{itemize}
	\end{itemize}
Thus, $\{V(P_1),\dots,V\left(P_{2n}\right)\}$ is a partition of $V\left(F_{m,n}^{\{2\}}\right)$. Next we show that $C_2$ is a cycle. 
	We observe that
	\begin{enumerate}[label=(\arabic*)]
		\item $P_i$ induces a path in $F_{m,n}^{\{2\}}$, for each $i\in [2n]$; 
		\item the final vertex of $P_1$ is $\{v_1,v_n\}$, while the initial vertex of $P_2$ is $\{w_2, v_n\}$, and these
	two vertices are adjacent in $F_{m,n}^{\{2\}}$; 
	\item for $i$ with $1<i<2n$, the final vertex of $P_i$ is $\{w_i,w_{i+1}\}$ while
	the initial vertex of $P_{i+1}$ is $\{w_{i+1},v_n\}$, and these two vertices are adjacent in $F_{m,n}^{\{2\}}$; and
	\item the final vertex of $P_{2n}$ is $\{w_1,w_n\}$ while the initial vertex of $P_1$ is $\{v_n,w_1\}$, and
	these two vertices are adjacent in $F_{m,n}^{\{2\}}$. 
\end{enumerate}
Statements (1)--(4) together imply that $C_2$ is a cycle in $F_{m,n}^{\{2\}}$. Thus, $C_2$ is a Hamiltonian cycle of $F_{m,n}^{\{2\}}$. Note that the vertices $\{w_1,v_1\}$ and $\{v_1,v_2\}$ are adjacent in $C_2$, since they are adjacent in $P_1$.

As an example, in Figure~\ref{fig:construction6} we show the Hamiltonian cycle $C_2$ in the graph  $F_{4,2}^{\{2\}}$. 
\begin{figure}[h!]
	\centering
	\includegraphics[width=0.8\textwidth]{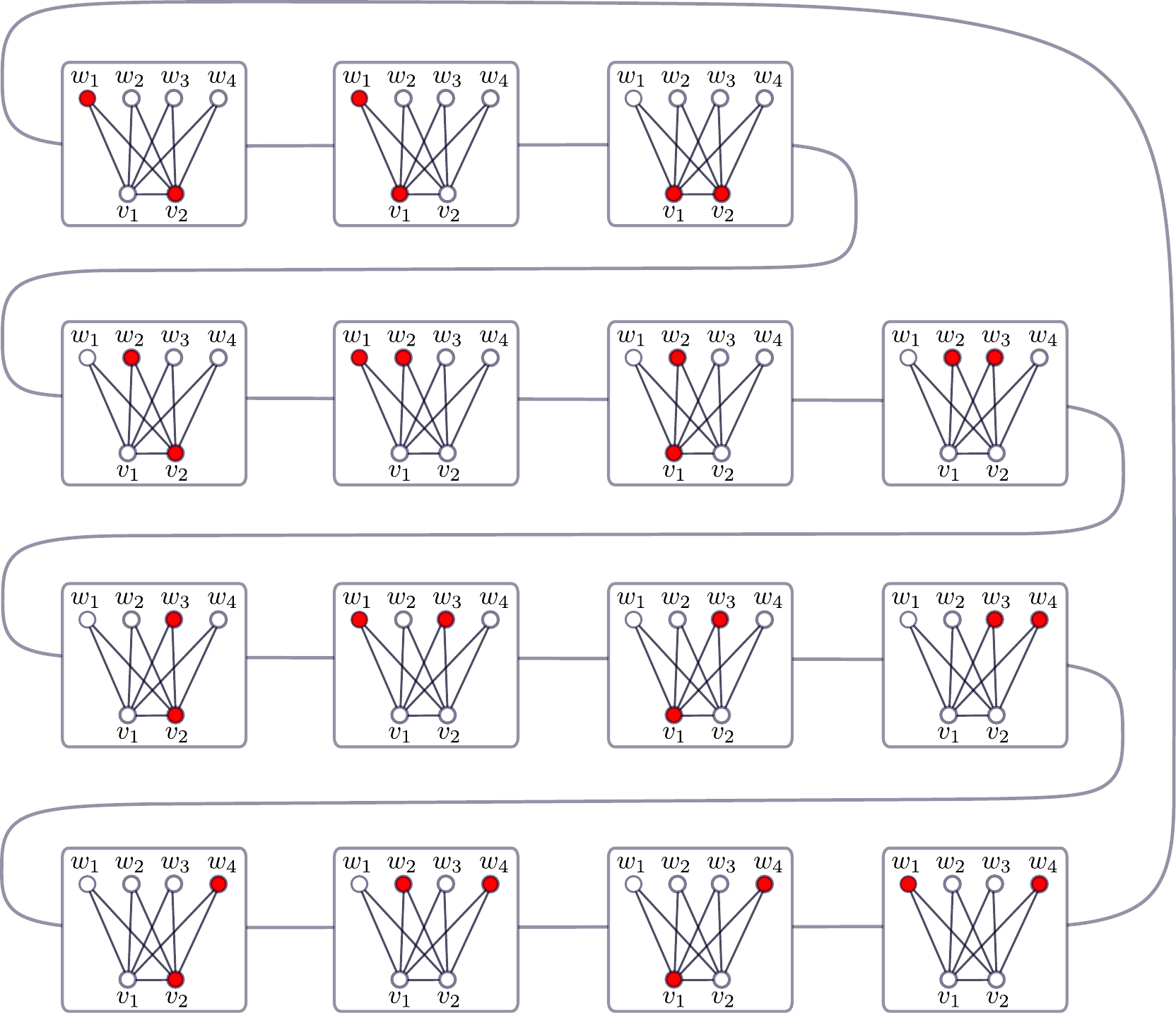}
	\caption{Hamiltonian cycle $C_2$ in the graph $F_{4,2}^{\{2\}}$. }
	\label{fig:construction6}
\end{figure}

\item {\bf Case} $\boldsymbol{1<m<2\,n.}$
	
	Consider again the paths $P_1,\dots, P_m$ defined in the previous case and let us modify them slightly in the following
	way:  
	\begin{itemize}
		\item $P_1'=P_1$;
		\item for $1<i<m$, let $P'_i$ be the path obtained from $P_i$ by deleting the vertices of type $\{w_i, w_j\}$, for each $j>m$;
		\item let $P_m'$ be the path obtained from $P_m$ by first interchanging the vertices $\{w_m, w_{m+1}\}$ and $\{w_m, w_1\}$ from their current positions in $P_m$, and then deleting the vertices of type $\{w_m, w_j\}$, for every $j>m$.
	\end{itemize}
	Given this construction of $P'_i$ we have the following: 
	\begin{itemize}
		\item[(A1)] $P'_i$ induces a path in $F_{m,n}^{\{2\}}$;
		\item[(A2)] for $1\leq i<m$ the path $P'_i$ has the same initial and final vertices as the path $P_i$, and $P'_m$ has the same initial vertex as $P_m$, and its final vertex is $\{w_i, w_1\}$; 
		\item[(A3)] since we have deleted only the vertices of type $\{w_i, w_j\}$ from $P_i$ to obtain $P_i'$, for each $j>m$ and $i \in [m]$, it follows that $\{V(P'_1),\ldots,V(P'_m)\}$ is a partition of $V\left(F_{m,n}^{\{2\}}\right)$. 
		
	\end{itemize}
	
	By (A1) and (A2) we can concatenate the paths $P'_1,\ldots,P'_m$  into a cycle $C'$ as follows:
	\[
	C':=P'_1\,P'_2\,\ldots\,P'_m(v_n, w_1)
	\]
	and then by (A3) it follows that $C'$ is a Hamiltonian cycle in $F_{m,n}^{\{2\}}$. Again, the vertices $\{w_1,v_1\}$ and $\{v_1,v_2\}$ are adjacent in $C'$ since they are adjacent in $P'_1$.

\item {\bf Case} $\boldsymbol{m>2\,n.}$
	
	Here, our aim is to show that $F_{m,n}^{\{2\}}$ is not Hamiltonian by using
	the following known result posed in West's book~\cite{west}. 
	
	\begin{prop}[Prop. 7.2.3, \cite{west}] If $G$ has a Hamiltonian cycle, then for each 
		nonempty set $S\subset V(G)$, the graph $G-S$ has at most $|S|$ connected components.  
	\end{prop}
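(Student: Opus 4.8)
The plan is the classical one: exploit that a Hamiltonian cycle is a spanning subgraph of $G$ and that a cycle splits into few pieces when few vertices are removed. Let $C$ be a Hamiltonian cycle of $G$. Since $C$ passes through every vertex, the subgraph $C$ has vertex set $V(G)$; hence for any $S\subseteq V(G)$ the graph $C-S$ is a spanning subgraph of $G-S$, with the same vertex set and a subset of the edges. Removing edges cannot decrease the number of components, so $\mu(G-S)\le\mu(C-S)$, and it suffices to prove $\mu(C-S)\le|S|$.

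To bound $\mu(C-S)$, I would picture $C$ as its $n$ vertices placed cyclically around a circle. The $|S|$ vertices of $S$ occupy $|S|\ge 1$ of these positions and cut the circle into exactly $|S|$ arcs; an arc is empty precisely when two elements of $S$ are consecutive along $C$. Each nonempty arc induces a path, and these paths are exactly the components of $C-S$, so $\mu(C-S)\le|S|$. Combined with the first paragraph, this yields $\mu(G-S)\le|S|$ for every nonempty $S\subset V(G)$, which is the statement. (The hypothesis $S\neq\emptyset$ is genuinely needed, since $\mu(G)=1>0$; and the extreme case $S=V(G)$ gives $0\le|S|$, which is trivial.)

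I do not expect a real obstacle here: this is the standard necessary condition for Hamiltonicity, and the only point meriting a second look is the arc count, which can alternatively be obtained by induction on $|S|$ (deleting one more vertex of degree two raises the component count by at most one, and the very first deletion raises it by none). The work that actually requires care comes afterward, when the Proposition is applied to $G=F_{m,n}^{\{2\}}$ with $m>2n$: one takes $S=\{\{v_i,w_l\}:i\in[n],\ l\in[m]\}$, so that $G-S$ consists of the $\binom{m}{2}$ mutually non-adjacent vertices $\{w_l,w_{l'}\}$ together with a single further component spanned by the vertices $\{v_i,v_j\}$ (connected because $P_n$ is connected and $n\geq2$ in this case); then $\mu(G-S)=\binom{m}{2}+1$, and since $\binom{m}{2}+1>nm=|S|$ whenever $m>2n$, the Proposition forces $F_{m,n}^{\{2\}}$ to be non-Hamiltonian in that range.
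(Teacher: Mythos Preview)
Your argument is correct and is exactly the standard proof of this well-known necessary condition. Note, however, that the paper does not supply its own proof of this proposition: it is quoted from West's book and used as a black box, so there is no in-paper proof to compare against. Your anticipated application to $F_{m,n}^{\{2\}}$ with $m>2n$, taking $S=\{\{v_i,w_l\}:i\in[n],\ l\in[m]\}$ and counting $\binom{m}{2}$ isolated vertices plus one further component, is precisely what the paper does.
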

	
	Then, we are going to exhibit a subset $S\subset V\left(F_{m,n}^{\{2\}}\right)$ such that
	\[\mu\left(F_{m,n}^{\{2\}}-S\right)>|S|.\]
	Let 
	\[
	S:=\big\{\{w_i,v_j\}:i\in [m] \text{ and }j\in [n]\big\}.
	\]
	Note that for each $i,j\in [m]$ with $i\neq j$, the vertex $\{w_i,w_j\}$ has all its neighbors in $S$, and so it is an isolated vertex of $F_{m,n}^{\{2\}}-S$. Since there are $\binom{m}{2}$ vertices of this type, we have this number of connected components of $F_{m,n}^{\{2\}}-S$, each of them having order one. On the other hand, the subgraph induced by the vertices of type $\{v_i,v_j\}$, for $i,j\in [n]$ with $i\neq j$, is a connected component of $F_{m,n}^{\{2\}}-S$. Then, $F_{m,n}^{\{2\}}-S$ has at least $\binom{m}{2}+1$ connected components. Since $m>2n$, it follows that \[\binom{m}{2}=\frac{m(m-1)}{2}\ge \frac{m(2n)}{2}=mn,\] and so
	\[\mu\left(F_{m,n}^{\{2\}}-S\right)\geq \binom{m}{2}+1>mn=|S|,\]
	as required. This completes the proof of Theorem~\ref{thm:main1}. 
\end{itemize}

\section{Proof of Theorem~\ref{thm:general-case-main}}
\label{sec:main}

	Let us first note a well-known property of token graphs that will be used throughout this section. 
	\begin{proposition} 
		\label{prop:complement}
	Let $G$ be a graph of order $n\ge 2$ and let $k$ be an integer with $1\le k\le n-1$. Then $G^{\{k\}}$  is isomorphic to  $G^{\{n-k\}}$.
	\end{proposition}
\begin{proof}
	Let $\psi:V\left(G^{\{k\}}\right)\to V\left(G^{\{n-k\}}\right)$ be the map given by
	\[\psi(A)=V(G)\setminus A.\]
	Clearly $\psi$ is  a bijection. Moreover, for any two vertices $A,B\in V\left(G^{\{k\}}\right)$ we have	
	\[A\triangle B=(V(G)\setminus A)\triangle (V(G)\setminus B)=\psi(A)\triangle \psi(B).\]
	Therefore, $A$ and $B$ are adjacent in $G^{\{k\}}$ if and only if $\psi(A)$ and $\psi(B)$ are adjacent in $G^{\{n-k\}}$. Thus, $\psi$ is an isomorphism. 
\end{proof}

The Hamiltonicity of $F_{1,n}^{\{k\}}$ was proved in~\cite{rive-tru}. However, in order to prove Theorem~\ref{thm:general-case-main}  we need a special Hamiltonian cycle in $F_{1, n}^{\{k\}}$. 

\begin{lemma}
		\label{lemma:m=1}
	Let $n$ and $k$ be integers with $1\leq k\le  n$. Then $F_{1,n}^{\{k\}}$ have a Hamiltonian cycle 
	$C$ in which the vertices $\{w_1,v_1,v_2,\ldots,v_{k-1}\}$ and $\{v_1,v_2,\ldots,v_k\}$
	are adjacent. 
\end{lemma}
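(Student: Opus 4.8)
The plan is to recycle the explicit construction already used in the $m=1$ case of Theorem~\ref{thm:main1} and lift it from $k=2$ to general $k$. First I would fix notation: write each $k$-subset of $V(F_{1,n})=\{w_1,v_1,\dots,v_n\}$ either as containing $w_1$ (so it is $\{w_1\}\cup S$ with $S$ a $(k-1)$-subset of $\{v_1,\dots,v_n\}$) or as contained entirely in $\{v_1,\dots,v_n\}$ (so it is a $k$-subset of the path $P_n$). The key structural fact is that $F_{1,n}=K_1+P_n$, so $w_1$ is adjacent to every $v_i$; hence from a token configuration containing $w_1$ we may slide the token on $w_1$ to \emph{any} free $v_i$, and conversely. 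This gives us a lot of freedom to move between the two families of vertices.

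The main step is to build the Hamiltonian cycle by the same ``column by column'' decomposition as in Section~\ref{sec:double}. For each $(k-1)$-subset $S\subseteq\{v_1,\dots,v_n\}$, group together the vertex $\{w_1\}\cup S$; the vertices of $F_{1,n}^{\{k\}}$ that do \emph{not} contain $w_1$ are exactly the vertices of $P_n^{\{k\}}$, and it is classical (and used in~\cite{rive-tru}) that $P_n^{\{k\}}$ has a Hamiltonian path — in fact one can take the ``revolving door'' / Eades--McKay style Gray code for $k$-combinations of $[n]$ under the adjacent-transposition-along-a-path relation. Concretely I would: (i) take a Hamiltonian path $Q$ of $P_n^{\{k\}}$ on the $w_1$-free vertices, chosen so that its endpoints are convenient, one of them being $\{v_1,\dots,v_k\}$; (ii) for the $w_1$-containing vertices, note they biject with $(k-1)$-subsets of $[n]$, i.e.\ with vertices of $P_n^{\{k-1\}}$, and take a Hamiltonian path $R$ there as well, with one endpoint $\{v_1,\dots,v_{k-1}\}$ (giving the token vertex $\{w_1,v_1,\dots,v_{k-1}\}$); (iii) splice $Q$ and $R$ into a single cycle using two ``$w_1$ slides'': at one junction, move from a $w_1$-free vertex $\{v_1,\dots,v_k\}$ to $\{w_1,v_1,\dots,v_{k-1}\}$ by sliding the token on $v_k$ onto $w_1$ — these two vertices differ by the edge $v_k w_1$, so they are adjacent in $F_{1,n}^{\{k\}}$ and this is precisely the adjacency the lemma demands — and at the other junction close the cycle by a second such slide at the other pair of endpoints, which must be chosen so that the endpoint of $R$ and the endpoint of $Q$ differ in exactly one $v$-token. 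Alternatively, and probably cleaner, I would re-examine the explicit cycle $C$ from the $m=1$ case of Theorem~\ref{thm:main1}: replace each $2$-token configuration by a $k$-token one by adjoining the fixed ``tail'' $\{v_{k-2}, v_{k-1},\dots\}$-type padding, but this needs care since the path structure forces the padding to move; so the Gray-code splicing above is the safer route.

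The hard part will be step (iii): guaranteeing that the two Hamiltonian paths $Q$ (on $w_1$-free vertices) and $R$ (on $w_1$-containing vertices) can be chosen with endpoints that are \emph{simultaneously} compatible for splicing into a cycle \emph{and} such that one specific splice is the edge $\{w_1,v_1,\dots,v_{k-1}\}\sim\{v_1,\dots,v_k\}$ named in the statement. Controlling all four endpoints at once is the delicate bookkeeping; I expect to handle it by induction on $n$ (peeling off $v_n$: configurations using $v_n$ vs.\ not, each recursively a token graph of a smaller fan or path), carrying the endpoint conditions as part of the inductive hypothesis, with small base cases $n=3$ (and $k=2$, where it reduces to the already-constructed cycle $C$) checked by hand. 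A secondary nuisance is the parity/orientation alternation ($\overleftarrow{T_i}$ vs.\ $T_i$) seen in Section~\ref{sec:double}, which will reappear and must be tracked so that consecutive column-paths meet at adjacent vertices.
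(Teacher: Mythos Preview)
Your primary approach has a genuine gap at step~(i)/(ii): the subgraph induced on the $w_1$-free vertices is exactly $P_n^{\{k\}}$ (and the $w_1$-containing vertices induce $P_n^{\{k-1\}}$), and these graphs \emph{do not} in general admit Hamiltonian paths. Both are bipartite with parts given by the parity of $\sum_{v_j\in A} j$, and the parts can be badly unbalanced. Already for $n=4$, $k=2$ the six $2$-subsets split as four odd-sum versus two even-sum, so $P_4^{\{2\}}$ has no Hamiltonian path; thus no choice of $Q$ exists. The ``revolving door'' and Eades--McKay listings you cite are Gray codes for \emph{different} closeness relations (homogeneous or two-apart transpositions), not for adjacent transpositions along $P_n$, and \cite{rive-tru} treats $F_{1,n}$, not $P_n$. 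So the splicing scheme cannot get off the ground as stated; the difficulty is not the endpoint bookkeeping in~(iii) but the nonexistence of $Q$ and $R$.

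Your fallback idea---peel off $v_n$ and induct---is the right instinct and is essentially what the paper does, though the paper goes further: setting $v_0:=w_1$, it partitions $V(F_{1,n}^{\{k\}})$ by the \emph{largest} index present, obtaining layers $H_i\simeq F_{1,i-1}^{\{k-1\}}$ for $k-1\le i\le n$. By the inductive hypothesis each $H_i$ (for $i\ge k+1$) has a Hamiltonian cycle containing the edge $\{v_0,v_1,\dots,v_{k-2},v_i\}\sim\{v_1,\dots,v_{k-1},v_i\}$; deleting that edge yields paths $P_i$ that chain together with alternating orientations. The bottom two or three layers are handled explicitly, with a case split on the parity of $n-k$. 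The point is that every inductive piece is again a \emph{fan} token graph, never a \emph{path} token graph, which sidesteps exactly the obstruction that kills your $w_1$-split.
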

\begin{proof}
	
	We proceed by induction on $k$. 
		If $k=1$, then we want a Hamiltonian cycle $C$ in $F_{1,n}^{\{1\}}$ in which the vertices $\{w_1\}$ and $\{v_1\}$ are adjacent in $C$. We define \[C:=\{w_1\}\{v_1\}\{v_2\}\dots \{v_n\},\]
	so, clearly, $C$ holds the lemma. The case $k=2$ corresponds to Theorem~\ref{thm:main1}, and this was done in Section~\ref{sec:double}. Assume from now on that $k>2$. \\	
Assume as induction hypothesis that $F_{1,n'}^{\{k-1\}}$ satisfies the lemma, for all  $n'\ge k-1> 1$. 
	
	For $i$ with $k-1\le i\leq n$, let $H_i$ be the subgraph of $F_{1,n}^{\{k\}}$ induced by the vertex set 
	\[S_i:=\left\{\{v_{j_1},v_{j_2},\ldots,v_{j_k}\}\in V\left(F_{1,n}^{\{k\}}\right) \colon 0\leq j_1< j_2 < \dots < j_k=i\right\}.
	\]
	The subgraph $H_i$ can be understood as the subgraph of $F_{1,n}^{\{k\}}$ induced by all the $k$-token configurations in which there is 
	a token fixed at vertex $v_i$ and the remaining $k-1$ tokens are moving on the subgraph induced by $\{v_0,v_1,\ldots,v_{i-1}\}$
	(which is isomorphic to $F_{1,i-1}$). With this in mind, note that  $H_i\simeq F_{1,i-1}^{\{k-1\}}$. Besides, 	we have the following. 
	\begin{remark}
		\label{rem:partition}
		$\{S_{k-1},S_k,\ldots,S_n\}$ is a partition of $V\left(F_{1,n}^{\{k\}}\right)$.
	\end{remark}
	
	For $i$ with $k+1\le i \le n$, we know that $H_i\simeq F_{1,i-1}^{\{k-1\}}$, then taking $n'=i-1$ we have $n'=i-1\ge k-1>1$. Thus, by induction hypothesis,   
	there is a Hamiltonian cycle 
	$C_i$ in $H_i$ where the vertices \[X_i:=\{v_0,v_1,v_2,\ldots,v_{k-2},v_i\} \text{\ and \ }Y_i:=\{v_1,v_2,\ldots,v_{k-1},v_i\}\]
	are adjacent in $C_i$. Let $P_i$ be the path obtained from $C_i$ by deleting the edge $X_iY_i$. Assume, without loss of generality, that the initial vertex of $P_i$ is $X_i$ and its final vertex is $Y_i$. Thus, we have the following.
	\begin{remark}
		\label{rem:hamiltonianpath}
		For $i$ with $k+1\leq i\leq n$, $P_i$ is a Hamiltonian path of $H_i$.
	\end{remark}
	
	Let us now proceed by cases depending on the parity of $n-k$. 
	\begin{itemize}
		\item \textbf{$\boldsymbol{n-k}$ is odd:}
		
		In this case note that $n-k\ge 1$ and so $n\ge k+1$.
		Consider the vertex sets $S_{k-1}$ and $S_k$. 
		For $0\leq j\leq k$, let \[Z_j:=\{v_0,v_1,\ldots,v_k\}\setminus \{v_j\}.\] 
		Then, $S_{k-1}=\{Z_k\}$ and $S_k=\{Z_0,Z_1,\ldots,Z_{k-1}\}$, 
		with the following adjacencies (in $F_{1,n}^{\{k\}}$): $Z_0$ is adjacent to  $Z_\ell$, for each $\ell$ with $1\le \ell\le k$, and $Z_t$ is adjacent to $Z_{t-1}$, for each $t$ with $1< t \le k$. 
		Let 
		\[P_k:=Z_{k-1}Z_{k-2}\dots Z_1Z_0Z_k.\] 
		By the adjacencies among vertices $Z_0,Z_1,\dots,Z_{k-1}$ we have the following.  
		\begin{remark}
			\label{rem:secondhamiltonianpath}
			$P_k$ is a Hamiltonian path of the subgraph induced by $S_{k-1}\cup S_k$.	
		\end{remark}
		
		Let 
		\[C:= 
		\overleftarrow{P_{k}}\, P_{k+1}\,
		\overleftarrow{P_{k+2}}\,P_{k+3}\,\ldots\,\overleftarrow{P_{n-1}}\,P_{n}.
		\]

			We show that $C$ is a Hamiltonian cycle of $F_{1,n}^{\{k\}}$. Observe the following:
			\begin{enumerate}[label=($\arabic*$)]
				\item the final vertex of $\overleftarrow{P_k}$ is the vertex $Z_{k-1}=\{v_0,v_1,\dots,v_{k-2},v_k\}$, while the initial vertex of $P_{k+1}$ is the vertex $X_{k+1}=\{v_0,v_1,\dots,v_{k-2},v_{k+1}\}$, and these two vertices are adjacent in $F_{1,n}^{\{k\}}$;
				\item for $i$ with $k+1\le i\le n-1$, the final vertex of $P_i$ is the vertex $Y_i=\{v_1,v_2,\dots,v_{k-1},v_i\}$, while the initial vertex of $\overleftarrow{P_{i+1}}$ is the vertex $Y_{i+1}=\{v_1,v_2,\dots,v_{k-1},v_{i+1}\}$, and these two vertices are adjacent in $F_{1,n}^{\{k\}}$; 
				\item for $i$ with $k+1\le i\le n-1$,  the final vertex of $\overleftarrow{P_i}$ is $X_i=\{v_0,v_1,\dots,v_{k-2},v_i\}$, while the initial vertex of $P_{i+1}$ is $X_{i+1}=\{v_0,v_1,\dots,v_{k-2},v_{i+1}\}$, and these two vertices are adjacent in $F_{1,n}^{\{k\}}$; 
				\item finally, the final vertex of $P_n$ is the vertex $Y_n=\{v_1,v_2,\dots,v_{k-1},v_n\}$ while the initial vertex of $\overleftarrow{P_{k}}$ is the vertex $Z_k=\{v_0,v_1,\dots,v_{k-1}\}$, and these two vertices are adjacent in $F_{1,n}^{\{k\}}$.   	
			\end{enumerate}
			Statements (1)--(4) together imply that $C$ is a cycle of $F_{1,n}^{\{k\}}$, and Remarks \ref{rem:partition}, \ref{rem:hamiltonianpath} and \ref{rem:secondhamiltonianpath}
			together imply that the cycle $C$ is Hamiltonian.  
			Finally, note that the vertices $Z_k=\{v_0,v_1,v_2,\ldots,v_{k-1}\}=\{w_1,v_1,v_2,\ldots,v_{k-1}\}$
			and $Z_0=\{v_1,v_2,\ldots,v_k\}$ are adjacent in $C$ (since they are adjacent in $P_k$), as required.

\item \textbf{$\boldsymbol{n-k}$ is even:} 

Suppose first that $n-k=0$, then $n=k$. In this case we have that $\{S_{k-1},S_k\}$ is a partition of $V\left(F_{1,n}^{\{k\}}\right)$. Here, consider the path $P_k$ defined in the previous case as \[P_k=Z_{k-1}Z_{k-2}\dots Z_1Z_0Z_k,\]
where $Z_j=\{v_0,v_1,\dots,v_k\}\setminus \{v_j\}$, for $j\in \{0,1,\dots,k\}$. Then, by the adjacencies among the vertices $Z_0,Z_1,\dots,Z_k$, it follows that $P_k$ induces a cycle in $F_{1,n}^{\{k\}}$, where the vertices $Z_k=\{v_0,v_1,\dots,v_{k-1}\}$ and $Z_0=\{v_1,v_2,\dots,v_k\}$ are adjacent in $P_k$. Since $\{S_{k-1},S_k\}$ is a partition of $V(F_{1,n}^{\{k\}})$, we have that $P_k$ is our desired Hamiltonian cycle. 

Assume $n-k\ge 2$, so $n\ge k+2$. 		
		Let $H$ be the subgraph of $F_{1,n}^{\{k\}}$ induced by the vertex set $S_{k-1}\cup S_k\cup S_{k+1}$. $H$ can be understood as the subgraph induced by all the $k$-token configurations in which the $k$ tokens are placed on $k$ vertices of $\{v_0,v_1,\dots,v_{k+1}\}$. Since the subgraph induced by the vertex set $\{v_0,v_1,\dots,v_{k+1}\}$ is isomorphic to $F_{1,k+1}$, it follows that $H$ is isomorphic to $F_{1,k+1}^{\{k\}}$.  
		
		By Proposition~\ref{prop:complement} we have that $H\simeq F_{1,k+1}^{\{k\}}\simeq F_{1,k+1}^{\{2\}}$.
		We are going to construct a Hamiltonian path $P$ of $H$. 
		
		For $i,j\in \{0,1,\ldots,k+1\}$ with $i\neq j$, let $A_{i,j}=\{v_0,v_1,\ldots,v_{k+1}\}\setminus \{v_i,v_j\}$. Then, 
		two vertices $A_{i,j}$ and $A_{r,t}$ are adjacent in $H$ if and only if $\{v_i,v_j\}$ and $\{v_r,v_t\}$ are adjacent
		in $F_{1,k+1}^{\{2\}}$. 
		
		For $1\leq t\leq k$, let
		\[R_t:=\begin{cases}
			A_{1,k}A_{1,k+1}A_{1,0}A_{1,k-1}A_{1,k-2}A_{1,k-3}\ldots A_{1,2} & \text{ if $t=1$, } \\
			A_{t,0}A_{t,k+1}A_{t,k}A_{t,k-1}A_{t,k-2}\ldots A_{t,t+1} & \text{ if $1<t<k$, and } \\
			A_{k,0} & \text{ if $t=k$. }
		\end{cases}
		\]
		Note that $R_1,R_2,\ldots,R_k$ are paths in $H$, and the concatenation
		$R:=R_1\,R_2\,\ldots R_k$ is a cycle in $H$, where the vertices $A_{k-1,k}$ and $A_{k,0}$ 
		are adjacent in $R$ (since $A_{k-1,k}$ is the final vertex of $R_{k-1}$ and $A_{k,0}$ is the initial vertex of $R_k$). Let $R'$ be the path obtained from $R$ by deleting the edge $(A_{k-1,k},A_{k,0})$, 
		and assume, without loss of generality, that the initial vertex of $R'$ is $A_{k,0}$ and its final 
		vertex is $A_{k-1,k}$. Now, let  
		\[P:=A_{k,k+1}\,A_{0,k+1}\,R'.\]
		We have the following.
		\begin{remark}
			\label{rem:secondpart-hamiltonianpath}
			$P$ is a Hamiltonian path of $H$ with initial vertex $A_{k,k+1}=\{v_0,\ldots,v_{k-1}\}$
			and final vertex $A_{k-1,k}=\{v_0,\ldots,v_{k-2},v_{k+1}\}$. Moreover, $A_{0,k+1}=\{v_1,v_2,\dots,v_k\}$ and $A_{k,k+1}=\{v_0,v_1,\dots,v_{k-1}\}$ are adjacent in $P$. 
		\end{remark}  
		
		Here we use the paths $P_{k+2},P_{k+3},\ldots,P_n$ defined above. 
		Let 
		\[C:=P\, P_{k+2}\,\overleftarrow{P_{k+3}}\,P_{k+4}\,\dots\,\overleftarrow{P_{n-1}}\,P_n.\]
		Proceeding similarly to the previous case it can be shown that $C$ is a cycle. 
 
 Remarks \ref{rem:partition}, \ref{rem:hamiltonianpath} and \ref{rem:secondpart-hamiltonianpath}
		together imply that $C$ is a Hamiltonian cycle of $F_{1,n}^{\{k\}}$. Finally, by Remark~\ref{rem:secondpart-hamiltonianpath} we know that vertices 
		$A_{k,k+1}=\{v_0,v_1,\ldots,v_{k-1}\}=\{w_1,v_1,\ldots,v_{k-1}\}$ and $A_{0,k+1}=\{v_1,v_2,\ldots,v_k\}$
		are adjacent in $C$, as required. 
	\end{itemize}
	Thus, in both cases, we have our desired Hamiltonian cycle. 
\end{proof}

In Figure~\ref{fig:construction5} is shown a Hamiltonian cycle of $F_{1,5}^{\{3\}}$ constructed as in the proof of Lemma~\ref{lemma:m=1}. 

\begin{figure}[h!]
	\centering
	\includegraphics[width=0.7\textwidth]{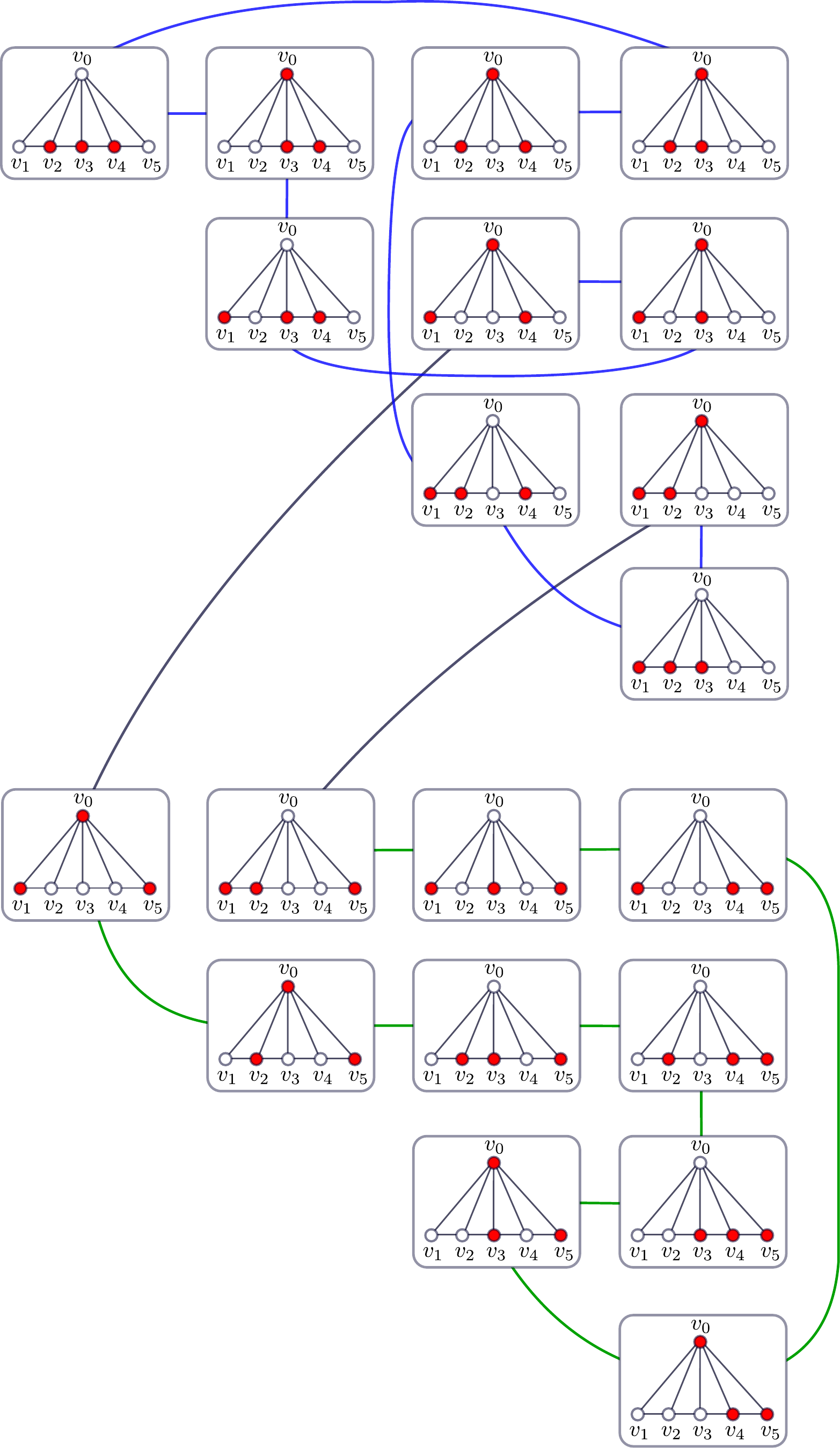}
	\caption{Hamiltonian cycle in $F_{1,5}^{\{3\}}$. }
	\label{fig:construction5}
\end{figure}

Now, we prove our main result for the fan graph $F_{m,n}$, with $m\geq 1$. We recall it next. 

\kgeneral*  

\begin{proof}[Proof of Theorem~\ref{thm:general-case-main}] 
	We claim that $F_{m,n}^{\{k\}}$ has a Hamiltonian cycle $C$ in which the vertices 
	\[\{w_1,v_1,v_2,\ldots,v_{k-1}\} \text{ \ and \ }\{v_1,v_2,\ldots,v_k\}\] are adjacent.  
	Clearly, this claim implies the theorem. 
	To show the claim, we use induction on $m$. The case $m=1$ is proved in Lemma~\ref{lemma:m=1}.  

 Assume as induction hypothesis that the claim holds for all $F_{m-1,n}^{\{k'\}}$ with $2\le k'\le n$ and $1\le m-1\le 2n$.
 
 	Note that the case $k=2$ corresponds to Theorem~\ref{thm:main1}, so that we assume $k>2$.
 
	Let 
	\[S_1:=\{A\in F_{m,n}^{\{k\}} \colon w_1\in A\} \text{ \ and \ } S_2:=\{B\in F_{m,n}^{\{k\}} \colon w_1\notin B\}.\]
	Clearly, $\{S_1,S_2\}$ is a partition of $V\left(F_{m,n}^{\{k\}}\right)$.
	Let $H_1$ and $H_2$ be the subgraphs of $F_{m,n}^{\{k\}}$ induced by $S_1$ and $S_2$, respectively. 

Note that $H_1\simeq F_{m-1,n}^{\{k-1\}}$ and $H_2\simeq F_{m-1,n}^{\{k\}}$. 
	By the induction hypothesis, there are cycles $C_1$ and $C_2$ such that
	\begin{enumerate}[label=(\roman*)]
		\item $C_1$ is a Hamiltonian cycle of $H_1$, where the vertices 
		$X_1:=\{w_1,w_{2},v_1,v_2,\ldots,v_{k-2}\}$ and $Y_1:=\{w_1,v_1,v_2,\ldots,v_{k-1}\}$
		are adjacent in $C_1$; and 
		\item $C_2$ is a Hamiltonian cycle of $H_2$, where the vertices
		$X_2:=\{w_{2},v_1,v_2,\ldots,v_{k-1}\}$ and $Y_2:=\{v_1,v_2,\ldots,v_k\}$
		are adjacent in $C_2$.  
	\end{enumerate}
	For $i=1,2$, let $P_i$ be the subpath of $C_i$, obtained by deleting the edge 
	$X_iY_i$. Note that $P_i$ is a Hamiltonian path of $H_i$ joining the vertices $X_i$ and $Y_i$, and let us assume that the initial vertex of $P_i$ is $X_i$ and its final vertex is $Y_i$. 
	On the other hand, note that $X_1$ is adjacent to $X_2$ and $Y_1$ is adjacent to $Y_2$, these two facts together imply 
	that the concatenation 
	\[C:=P_1\,\overleftarrow{P_2} \] 
	is a cycle in $F_{m,n}^{\{k\}}$. 
	Since $\{S_1,S_2\}$ is a partition of $V\left(F_{m,n}^{\{k\}}\right)$, it follows that $C$ is  
	Hamiltonian. Finally, note that the vertices 
	\[Y_1=\{w_1,v_1,v_2,\ldots,v_{k-1}\}\text{\ and \ }Y_2=\{v_1,v_2,\ldots,v_k\}\] are adjacent in $C$, since $Y_1$ is the final vertex of $P_1$ and $Y_2$ is the initial vertex of $\overleftarrow{P_2}$. This completes the proof.	
\end{proof}

\section{A relationship between Gray codes for combinations and the Hamiltonicity of token graphs}
\label{app:Gray}

There are several applications of token graphs to Physics and Coding Theory; see, e.g.~\cite{fisch2, fisch, Jo, soto}. 
 
Regarding the Hamiltonicity, there is a direct relationship between the Hamiltonicity of token graphs and Gray codes for combinations. 

Consider the problem of generating all the subsets of an $n$-set, which can be reduced to the problem of generating all possible binary strings of length $n$ (since each $k$-subset can be transformed into a $n$-binary string by placing an $1$ in the $j$-th entry if $j$ belongs to the subset, and $0$ otherwise). The most straightforward way of generating all these $n$-binary strings is counting in binary; however, many elements may change from one string to the next. Thus, it is desirable that only a few elements change between successive
strings. The case when successive strings differ by a single bit, is commonly known as \emph{Gray codes}. Similarly, the problem of generating all the $k$-subsets of an $n$-set is reduced to the problem of generating all the $n$-binary strings of constant weight $k$ (with exactly $k$ $1$'s). 

Gray codes are known to have applications in different areas, such as cryptography, circuit testing, statistics and exhaustive combinatorial searches. For a more detailed information on Gray codes, we refer the reader to \cite{baylis, ruskey-graycodes, savage}. 
Next, we present a formal definition of Gray codes. 

Let $S$ be a set of $n$ combinatorial objects and $C$ a relation on $S$, $C$ is called the \textit{closeness relation}. A \emph{Combinatorial Gray Code} (or simply \emph{Gray code}) for $S$ is a sequence $s_1,s_2,\ldots,s_n$ of the elements of $S$ such that $(s_i,s_{i+1})\in C$, for $i=1,2,\ldots,n-1$. Additionally, if $(s_n,s_1)\in C$, the Gray code is said to be \emph{cyclic}.
In other words, a Gray code for $S$ with respect to $C$ is a listing of the elements of $S$ in which successive elements are close (with respect to $C$).  There is a digraph $G(S,C)$, the \emph{closeness graph}, associated to $S$ with respect to $C$, where the vertex set and edge set of $G(S,C)$ are $S$ and $C$, respectively. If the closeness relation is symmetric, $G(S,C)$ is an undirected graph. A Gray code (resp. cyclic Gray code) for $S$ with respect to $C$ is a Hamiltonian path (resp. a Hamiltonian cycle) in $G(S,C)$. 

We are interested in Gray codes for combinations. A $k$-combination of the set $[n]$ is a $k$-subset of $[n]$, which in turn, can be thought as a binary string of lenght $n$ and constant weight $k$ (it has $k$ $1$'s and $n-k$ $0$'s). Consider the set $S=S(n,k)$ of all the $k$-combinations of $[n]$. Next, we mention three closeness relations that can be applied to $S$; for other closeness relations we refer to \cite{ruskey-graycodes}. 
\begin{itemize}
	\item[1)] The \emph{transposition} condition: two $k$-subsets are close if they differ in exactly two elements. Example: $\{1,2,5\}$ and $\{2,4,5\}$ are close, while $\{1,2,5\}$ and $\{1,3,4\}$ are not. 
	\item[2)] The \emph{adjacent transposition} condition: two $k$-subsets are close if they differ in exactly two consecutive elements $i$ and $i+1$. Example: $\{1,2,5\}$ and $\{1,3,5\}$ are close, while $\{1,2,5\}$ and $\{1,4,5\}$ are not. 
	\item[3)] The \emph{one or two apart transposition} condition: two $k$-subsets are close if they differ in exactly two elements $i$ and $j$, with $|i-j|\leq 2$. Example: $\{1,2,5\}$ and $\{1,4,5\}$ are close, while $\{1,2,5\}$ and $\{2,4,5\}$ are not.  
\end{itemize}

The relationship between the closeness graph associated to $S$ with respect to these closeness conditions and some token graphs are showed in the following propositions.  

\begin{proposition}
	\label{prop:transp}
	If $C$ is the transposition condition, then the closeness graph $G(S,C)$ is isomorphic to $K_n^{\{k\}}$, where $K_n$ denotes the complete graph of order $n$. 
\end{proposition}
\begin{proof}
	Let $V(K_n):=\{1,2,\dots,n\}$. Then, $S=V(K_n^{\{k\}})$. Note that two $k$-subsets $A$ and $B$ are adjacent in $G(S,C)$ if and only if they differ in exactly two elements, that is, $|A\triangle B|=2$, and this holds if and only if $A$ and $B$ (as vertices of $K_n^{\{k\}}$) are adjacent in $K_n^{\{k\}}$. 
\end{proof}

\begin{proposition}
	\label{prop:adjac}
	If $C$ is the adjacent transposition condition, then the closeness graph $G(S,C)$ is isomorphic to $P_n^{\{k\}}$, where $P_n$ denotes the path graph of order $n$. 
\end{proposition}
\begin{proof}
	Let $V(P_n):=\{1,2,\dots,n\}$ with $i$ and $i+1$ adjacent for each $i\in [n-1]$. Then $S=V(P_n^{\{k\}})$. Two $k$-subsets $A$ and $B$ are adjacent in $G(S,C)$ if and only if they differ in exactly two consecutive elements $i$ and $i+1$, that is, $A\triangle B=\{i,i+1\}$, and this holds if and only if $A$ and $B$ are adjacent in $P_n^{\{k\}}$. 
\end{proof}

Given a graph $G$, the \emph{square} $G^2$ of $G$ is the graph on $V(G)$ in which two vertices are adjacent in $G^2$ if they are at distance at most two in $G$. 

\begin{proposition}
	\label{prop:apart}
	If $C$ is the one or two apart transposition condition, then the closeness graph $G(S,C)$ is isomorphic to $\left(P_n^2\right)^{\{k\}}$, where $P_n^2$ denotes the square of the path graph of order $n$. 
\end{proposition}
\begin{proof} Let $V(P_n):=\{1,2,\dots,n\}$ with $i$ and $j$ adjacent whenever $|i-j|\le 2$. Then $S=V((P_n)^{\{k\}})$. Any two $k$-subsets $A$ and $B$ are adjacent in $G(S,C)$ if and only if they differ in exactly two elements $i$ and $j$ with $|i-j|\le 2$, that is, $A\triangle B=\{i,j\}$ with $i\neq j$ and $|i-j|\le 2$,  and this holds if and only if $A$ and $B$ are adjacent in $(P_n)^{\{k\}}$. 	
\end{proof}

Note that when the closeness graph $G(S,C)$ associated to $S$ with respect to $C$ is isomorphic to the $k$-token graph of some graph $G$, then  a Gray code and a cyclic Gray code for $S$ with respect to $C$ correspond to a Hamiltonian path and a Hamiltonian cycle, respectively, of the $k$-token graph of $G$.

\section{Conclusions}
\label{sec:concl}

The study of token graphs began in the 90's, and since then, several connections of token graphs with other research areas have been discovered, such as Quantum Mechanics and Coding Theory; several researchers are currently exploring more applications of token graphs to Physics. The study of token graphs is a current line of research for several researchers along the world. In this paper we study the Hamiltonicity of token graphs of join graphs. Our main results are the following: we provide necessary and sufficient conditions for the Hamiltonicity of the 2-token graphs of fan graphs $F_{m,n}$. For the $k$-token graph of the fan graph $F_{m,n}$, we provide sufficient conditions on the parameters $m$, $n$ and $k$, with $2< k < m+n-2$, for having $F_{m,n}^{\{k\}}$ Hamiltonian. 
As a corollary of this last result, and using a simple property of token graphs, we provide sufficient conditions on the graphs $G_1$ and $G_2$ such that the $k$-token graph of the join graph $G=G_1+G_2$ is Hamiltonian. Our results provides, to our knowledge, the first family of non-Hamiltonian graphs $G$ with Hamiltonian $k$-token graphs $G^{\{k\}}$, where $2<k<|G|-2$. Before this work, only a finite number of graphs satisfying this property were known. 

There are several open lines of research regarding the Hamiltonicity of token graphs. We would like to suggest some open problems for future research. 

\begin{itemize}
	\item[1.]  To find other families of graphs with Hamiltonian $k$-token graphs. 
\end{itemize}

\begin{itemize}
	\item[2.] Given two graphs $G$ and $H$, consider the Cartesian product $G\, \square\, H$ of $G$ and $H$. To study the Hamiltonicity of $(G\,\square\, H)^{\{k\}}$
	in terms of the Hamiltonicity of $G$ and $H$. Similarly for other products of graphs as the corona of two graphs. 
\end{itemize}
For $k=2$, it is known that the smallest Hamiltonian graph $G$ for which $G^{\{2\}}$ is Hamiltonian, is a cycle with an odd chord. Then, a natural problem is the following. 
\begin{itemize}
	\item[3.] For $k>2$, to find the smallest Hamiltonian graph $G$ for which $G^{\{k\}}$ is Hamiltonian. 
\end{itemize}

\section{Acknowledgements}

We thank the referees for their useful suggestions. A. L. Trujillo-Negrete was partially supported by CONACYT (Mexico), grant 253261. 

%%%%%%%%%%%%%%%%%%%%%%%%%%

%%%%%%%%%%%%%%%%%%%%
%

\end{document}